\documentclass[a4paper,11pt]{article}

\IfFileExists{styleart.sty}{\input{styleart.sty}}{%
  \usepackage{amsmath,amssymb,amsthm}
  \newtheorem{theorem}{Theorem}[section]
  \newtheorem{proposition}[theorem]{Proposition}
  \newtheorem{lemma}[theorem]{Lemma}
  \newtheorem{fact}[theorem]{Fact}
  \theoremstyle{definition}
  \newtheorem{definition}[theorem]{Definition}
}

\title{The theory of nonabelian free groups has no model companion}
\date{\today}

\author{O. Kharlampovich, R. Sklinos%
\thanks{The first-named author thanks the Dolciani--Halloran Foundation for its support.
The second-named author was supported by the National Natural Science Foundation of China (NSFC), grant no.~12350610234.}}

\begin{document}

\maketitle

\begin{abstract}
We prove that the first-order theory of nonabelian free groups does not
admit a model companion. 
\end{abstract}
\section{Introduction}
The  results of \cite{MR2211515} and  \cite{MR2238945} on the
completeness of the first-order theory of nonabelian free groups opened
the way to the systematic study of its model theory. A fundamental
question in this direction is whether this theory admits a model
companion. The question belongs to a classical line of model-theoretic
algebra going back to Robinson.

The classical example is the theory of fields, whose model companion is
the theory of algebraically closed fields. In the case of nonabelian
free groups, despite the understanding of their first-order
theory provided by the solution of Tarski's problem, the existence of a
model companion has remained open. We prove that no such theory exists.

\begin{theorem} \label{thm:main}
The first-order theory of nonabelian free groups does not admit a model
companion.
\end{theorem}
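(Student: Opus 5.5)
Our strategy is the standard one: if $T = \mathrm{Th}(F_2)$ had a model companion $T^*$, then the class of existentially closed models of $T$ would be elementary, namely the models of $T^*$. (This uses that $T$ is complete, hence $\forall\exists$-axiomatizable up to the inductive part relevant here. Model companions exist for $T$ iff they exist for $T_\forall$, and then the companion axiomatizes the e.c. models of $T_\forall$.) So we will work with $T_\forall$, the universal theory of nonabelian free groups, whose models are exactly the groups embedding into ultrapowers of $F_2$. Note the following:
\begin{itemize}
\item Every model of $T_\forall$ is commutative-transitive (CT) and CSA.
\item Every model of $T_\forall$ embeds into a model of $T$.
\end{itemize}
Hence $T$ and $T_\forall$ have the same model companion, if either has one. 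To refute elementarity, we will produce e.c. models $G_n$ of $T_\forall$ and a nonprincipal ultraproduct $\prod G_n/\mathcal U$ that is not existentially closed.

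The construction has three steps.

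\textbf{Step 1: roots in e.c. models.} We show that in an e.c. model $M$ every element has roots of all orders, and in fact satisfies many "divisibility" existential properties. Given $g\in M$ and $n\geq 2$, form $M *_{\langle g\rangle} \langle g, t \mid t^n = g\rangle$. This extension of a centralizer is again a model of $T_\forall$ (extensions of centralizers of models of $T_\forall$ remain in the class, by the Kharlampovich--Myasnikov/Remeslennikov description of $\exists$-free groups). Since $M$ is e.c., $M$ already contains an $n$-th root of $g$. It follows that centralizers in e.c. models are divisible torsion-free abelian groups, i.e.\ $\mathbb Q$-vector spaces of rank one in each maximal abelian subgroup.

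\textbf{Step 2: a non-first-order property.} Consider the existential formulas
\[
\varphi_n(x,y) := \exists z\,\bigl(z^n = x \wedge [z,y]=1\bigr) \quad\text{or simply}\quad \psi_n(x) := \exists z\,(z^n=x).
\]
Roots alone are expressible by $\forall\exists$ axioms, so they cannot kill the companion. We therefore need a property $P$ that holds in all e.c. models but fails in some ultraproduct of them, or conversely an existential formula with parameters that is realized in an extension but not in the ultraproduct. The candidate is the following statement about e.c. models $M$ of $T_\forall$: for $a,b$ noncommuting, the equation
\[
x^{-1}ax = b^{m} \quad\text{or}\quad [a^x, b]=1
\]
is solvable in some extension in $T_\forall$ iff it is solvable in $M$. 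We want to exploit that "being conjugate into the centralizer of $b$" is a type-definable but not definable condition. Concretely, we look for elements $a_n,b_n\in G_n$ such that:
\begin{itemize}
\item the translation lengths of $a_n$ tend to infinity;
\item in the ultraproduct, $a$ and $b$ become "independent";
\item an HNN extension $\langle G, t \mid t^{-1} a t = b\rangle$ is still in $T_\forall$ (conjugating two non-proper-power elements in a CSA group preserves being a model of $T_\forall$, provided neither is conjugate into the centralizer of the other);
\item yet $a,b$ are not conjugate in the ultraproduct.
\end{itemize}

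\textbf{Step 3: the conjugacy argument.} This is the crux. In an e.c. $M$, any two nontrivial elements generating maximal cyclic-type centralizers should be conjugate, by the HNN argument above. This is a $\forall\exists$ statement, so it does not immediately break elementarity either. The real obstacle is therefore to find a genuinely non-elementary feature. Our first attempt will use the sequence of formulas "$x$ is a product of at most $k$ commutators/conjugates with bounded witness", and show through Sela's quantifier reduction to $\forall\exists$ formulas that e.c. models are characterized by an infinite disjunction that cannot be finitized. We would use compactness: assume $T^*$ exists, note that it is then model complete, so every formula is $T^*$-equivalent to an existential one. We would then show that the formula "$x$ is primitive-like" (a $\forall$-definable set in $F_2$, such as the set of non-proper-powers or the centralizer-generator condition) is not existentially definable in e.c. models, using Step 1. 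Indeed, in an e.c. model every element is a proper power, so the relevant universal formula collapses. We expect this definability analysis to be the main obstacle, and we would attack it by building two e.c. models as unions of chains, one a union of iterated centralizer extensions, such that an existential formula distinguishes them despite their agreeing on all candidate finite axiom fragments.
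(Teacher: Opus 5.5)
\paragraph{Verdict.} Your framework is legitimate: a companion exists iff the e.c.\ models of $T_\forall$ form an elementary class. But the argument built on it has a genuine gap, and its concrete claims are false.

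\paragraph{Step 1 is false.} The group $M*_{\langle g\rangle}\langle t\mid t^n=g\rangle$ is not an extension of a centralizer; that would be $M*_{C(g)}(C(g)\times\langle t\rangle)$. Adjoining roots also does not stay inside $T_\forall$. For $M=\mathbb F_2=\langle a,b\rangle$, $g=[a,b]$, $n=2$ you get $\langle a,b,t\mid [a,b]=t^2\rangle\cong\pi_1(N_3)$, the genus-$3$ nonorientable surface group, which is not a limit group. Worse, Lyndon's theorem says $a^2b^2=c^2$ forces $a,b,c$ to commute in a free group; more generally Schützenberger showed nontrivial commutators in free groups are never proper powers. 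Hence the universal sentence $\forall x\,\forall y\,\forall t\,([x,y]=t^2\to[x,y]=1)$ lies in $T_\forall$. So in \emph{every} model of $T_\forall$, e.c.\ or not, every nontrivial commutator is a nonsquare.

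\paragraph{Step 2's HNN claim also fails.} In $\langle\mathbb F_2,t\mid t^{-1}at=[a,b]\rangle$ you get $a\cdot t^{-1}at=b^{-1}ab$ with $a\neq 1$. This cannot happen in a free group $F$. In $N/[N,F]$, with $N$ the normal closure of a nontrivial $x$, all conjugates of $x$ have the same image, and that image has infinite order. So $x\cdot x^t=x^b$ forces $x=1$.

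\paragraph{Step 1 would defeat your conclusion anyway.} If every element of an e.c.\ model were a proper power, the set of non-proper-powers would be empty. It would then be trivially quantifier-free definable, so no non-definability could follow.

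\paragraph{Steps 2 and 3 are not yet an argument.} They list candidate strategies. No models $G_n$ and no non-e.c.\ ultraproduct are produced, and you say yourself that the non-elementary feature has not been found.

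\paragraph{The missing idea.} Your instinct in Step 3, that a universally definable set such as the nonsquares cannot be existentially definable, is exactly the paper's target. What you lack is a way to get from the hypothetical companion back to free groups. The paper does it as follows.
\begin{enumerate}
\item Model completeness gives an existential $\phi(x)=\exists\bar y\,\psi(x,\bar y)$ defining the nonsquares in the companion $\mathcal U$. So $\theta(x)=\exists y\,(y^2=x)\vee\phi(x)$ holds of every element of a nonabelian model of $\mathcal U$.
\item Uniform small cancellation words plus the implicit function theorem turn this into one existential sentence, $\exists p\,\exists q\,\bigl([p,q]\neq1\wedge\bigwedge_{w\in\mathcal W}\theta(w(p,q))\bigr)$ for a finite family $\mathcal W$. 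This transfers to $\mathbb F_2$ because $\mathcal U^\forall=T_{fg}^\forall$.
\item The transferred sentence forces $\theta(z)$, hence $\phi(z)$, for a free generator $z$ of $\mathbb F_2*\langle z\rangle$, since $z$ is not a square there.
\item An existential formula true of a primitive element is true of its powers, via the embedding $z\mapsto z^2$. So $\phi(z^2)$ holds.
\item This contradicts the universal consequence $\forall x\,\forall\bar y\,\forall t\,\neg\bigl(\psi(x,\bar y)\wedge t^2=x\bigr)$ of $\mathcal U$.
\end{enumerate}
Nothing in your outline plays the role of this transfer step.
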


The proof rests on an obstruction coming from existential formulas and
proper powers in free groups. If a model companion existed, the set of
nonsquares would be existentially definable in it. Using uniform small
cancellation together with the implicit function theorem over free
groups, we show that such a formula must be satisfied by a primitive
element of a free group. But an existential formula which holds of a
primitive element also holds of each of its nontrivial powers. In
particular, the formula defining nonsquares would hold of a square,
yielding a contradiction.

\section{Preliminaries}

Throughout, a first-order theory is assumed to be consistent.

\begin{definition}[Existentially closed substructure]
Let $\mathcal A\subseteq\mathcal B$ be first-order structures in the
same language. We say that $\mathcal A$ is existentially closed in
$\mathcal B$ if for every
existential formula $\phi(\bar{x})$ and every tuple
$\bar a\in A$, if $\mathcal B\models\phi(\bar a)$, then $\mathcal A\models\phi(\bar a)$.
\end{definition}

\begin{definition}[Model complete theory]
A first-order theory $\mathcal{T}$ is model complete if for any two models $\mathcal{A}$, $\mathcal{B}$ of $\mathcal{T}$ with $\mathcal{A}\subseteq\mathcal{B}$ we have that $\mathcal{A}$ is existentially closed in $\mathcal{B}$.
\end{definition}

\begin{fact}(\cite[Theorem 3.5.1]{MR1347464})\label{ModelComplete}
The first-order theory $\mathcal{T}$ is model complete if and only if every first-order formula is equivalent modulo $\mathcal{T}$ to an existential formula.
\end{fact}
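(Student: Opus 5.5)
The plan is to prove the two directions separately. The backward direction is immediate, and the forward direction goes through showing that model completeness forces every embedding between models of $\mathcal T$ to be elementary, followed by the Łoś--Tarski preservation theorem.

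\emph{($\Leftarrow$).} Suppose every formula is $\mathcal T$-equivalent to an existential one. Let $\mathcal A\subseteq\mathcal B$ be models of $\mathcal T$, let $\phi(\bar x)$ be existential, and let $\bar a\in A$ with $\mathcal B\models\phi(\bar a)$.
\begin{itemize}
\item Choose an existential $\psi$ with $\mathcal T\models\neg\phi\leftrightarrow\psi$.
\item If $\mathcal A\models\neg\phi(\bar a)$, then $\mathcal A\models\psi(\bar a)$.
\item Existential formulas go up along substructures: witnesses in $A$ remain witnesses in $B$, and quantifier-free formulas are absolute between $\mathcal A$ and $\mathcal B$. Hence $\mathcal B\models\psi(\bar a)$, that is, $\mathcal B\models\neg\phi(\bar a)$, a contradiction.
\end{itemize}
So $\mathcal A$ is existentially closed in $\mathcal B$.

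\emph{($\Rightarrow$), first step (Robinson's lemma).} If $\mathcal A\subseteq\mathcal B$ and $\mathcal A$ is existentially closed in $\mathcal B$, then there is an elementary extension $\mathcal A^*\succeq\mathcal A$ into which $\mathcal B$ embeds over $A$. I would prove this by compactness applied to $\mathrm{Eldiag}(\mathcal A)\cup\mathrm{Diag}(\mathcal B)$, with new constants for the elements of $B\setminus A$.
\begin{itemize}
\item A finite piece of $\mathrm{Diag}(\mathcal B)$ is a single quantifier-free $\chi(\bar a,\bar b)$ true in $\mathcal B$.
\item Then $\mathcal B\models\exists\bar z\,\chi(\bar a,\bar z)$, so by existential closedness $\mathcal A\models\exists\bar z\,\chi(\bar a,\bar z)$.
\item Interpreting $\bar b$ by such witnesses in $A$ makes $\mathcal A$ itself a model of this finite piece together with $\mathrm{Eldiag}(\mathcal A)$.
\end{itemize}

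\emph{($\Rightarrow$), second step (embeddings are elementary).} Assume $\mathcal T$ is model complete. I prove by induction on $\phi$ the statement: for all models $\mathcal A\subseteq\mathcal B$ of $\mathcal T$ and all $\bar a\in A$, $\mathcal A\models\phi(\bar a)$ iff $\mathcal B\models\phi(\bar a)$. The induction hypothesis is quantified over all pairs of models, which is what makes the argument work.
\begin{itemize}
\item Atomic formulas and Boolean connectives are routine.
\item For $\exists y\,\phi$, the implication from $\mathcal A$ to $\mathcal B$ uses the induction hypothesis on a witness in $A$.
\item Conversely, suppose $\mathcal B\models\phi(\bar a,b)$. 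Apply Robinson's lemma to get $\mathcal A\preceq\mathcal A^*\supseteq\mathcal B$. Here $\mathcal A^*\models\mathcal T$, so $\mathcal B\subseteq\mathcal A^*$ is again a pair of models.
\item The induction hypothesis for this pair gives $\mathcal A^*\models\phi(\bar a,b)$, hence $\mathcal A^*\models\exists y\,\phi(\bar a,y)$. By elementarity, $\mathcal A\models\exists y\,\phi(\bar a,y)$.
\end{itemize}

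\emph{($\Rightarrow$), conclusion.} Every formula $\phi(\bar x)$ is now preserved under extensions of models of $\mathcal T$. The Łoś--Tarski preservation theorem then yields an existential $\psi$ with $\mathcal T\models\phi\leftrightarrow\psi$. If one prefers to avoid quoting it, the compactness argument runs as follows.
\begin{itemize}
\item Let $\Psi$ be the set of existential consequences $\psi$ of $\mathcal T\cup\{\phi\}$.
\item Show $\mathcal T\cup\Psi(\bar c)\models\phi(\bar c)$. Take $\mathcal B\models\mathcal T\cup\Psi(\bar c)$ and embed $\mathcal B$ into a model of $\mathcal T\cup\{\phi(\bar c)\}\cup\mathrm{Diag}(\mathcal B)$. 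Consistency of this theory comes from the universal sentences true in $\mathcal B$ and the definition of $\Psi$.
\item Preservation of $\phi$ does not apply directly here, since we pass from the larger model down to $\mathcal B$. So apply the preservation property to $\neg\phi$, which is also preserved by the second step, and run the symmetric argument for $\neg\phi$.
\item Finally, compactness reduces $\Psi$ to a single existential formula.
\end{itemize}

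The main obstacle is the existential step of the induction in the second step. It is exactly where Robinson's lemma is needed, and the induction must be set up uniformly over all pairs of models so that it applies to the auxiliary pair $\mathcal B\subseteq\mathcal A^*$.
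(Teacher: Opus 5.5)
The paper gives no proof of this Fact; it only cites the textbook. Your argument therefore has to stand on its own, and its main line does. The direction ($\Leftarrow$) is correct, and so are Robinson's lemma and the induction showing that model completeness makes every inclusion $\mathcal A\subseteq\mathcal B$ of models of $\mathcal T$ elementary. Quantifying the induction hypothesis over all pairs of models is exactly what the existential step needs. From there, quoting the \emph{relativized} \L{}o\'s--Tarski theorem finishes the proof: a formula preserved under extensions among models of $\mathcal T$ is $\mathcal T$-equivalent to an existential formula.

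The compactness sketch you offer in place of \L{}o\'s--Tarski fails at the consistency step. Take $\Psi$ to be the \emph{existential} consequences of $\mathcal T\cup\{\phi\}$. Then the hypothesis $\mathcal B\models\Psi(\bar c)$ tells you nothing about the consistency of $\mathcal T\cup\{\phi(\bar c)\}\cup\mathrm{Diag}(\mathcal B)$. An inconsistency gives $\mathcal T\models\phi(\bar c)\to\forall\bar z\,\neg\chi(\bar c,\bar z)$ with $\mathcal B\models\exists\bar z\,\chi(\bar c,\bar z)$. That is a \emph{universal} consequence of $\phi$, so it is not in $\Psi$, and no contradiction follows. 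Worse, your second step makes $\mathcal T\cup\mathrm{Diag}(\mathcal B)$ complete. So consistency of that theory is equivalent to $\mathcal B\models\phi(\bar c)$, the very thing you are trying to prove.

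Your concern about ``passing down from the larger model'' is a red herring. Your second step already makes $\mathcal B\subseteq\mathcal C$ elementary, so downward transfer is free. Switching to $\neg\phi$ does not repair the consistency claim unless you also change $\Psi$. There are two correct versions:
\begin{itemize}
\item Keep $\Psi$ existential, but prove that $\mathcal T\cup\{\phi(\bar c)\}\cup\mathrm{Th}_\forall(\mathcal B,\bar c)$ is consistent. Then embed a model of it over $\bar c$ into an elementary extension of $\mathcal B$ (another Robinson-lemma compactness argument), so that upward preservation applies.
\item Take $\Psi$ to be the universal consequences of $\mathcal T\cup\{\neg\phi\}$. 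Then your $\mathrm{Diag}(\mathcal B)$ argument works verbatim and shows that $\neg\phi$ is $\mathcal T$-equivalent to a universal formula.
\end{itemize}

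Finally, your route is longer than it needs to be. Assume $\mathcal A$ is existentially closed in $\mathcal B$ for all models $\mathcal A\subseteq\mathcal B$ of $\mathcal T$. This says precisely that existential formulas are preserved under substructures among models of $\mathcal T$. So the $\mathrm{Diag}$ argument with universal consequences shows directly that every existential formula is $\mathcal T$-equivalent to a universal one. An induction on the quantifier prefix of a prenex form then makes every formula $\mathcal T$-equivalent to an existential one. This avoids Robinson's lemma and the elementarity induction altogether. What your detour buys is the intermediate fact that all embeddings between models of $\mathcal T$ are elementary, which the short route recovers only as a corollary.
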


Let $\mathcal{T}$ be a first-order theory. We denote by $\mathcal{T}^{\forall}$ the set of all universal first-order sentences which are consequences of $\mathcal{T}$.

\begin{definition}[Model companion]
Let $\mathcal{T}$ be a first-order theory. Then $\mathcal{T}$ admits a model companion if there exists a model complete first-order theory $\mathcal{U}$ such that $\mathcal{T}^{\forall}=\mathcal{U}^{\forall}$. The theory $\mathcal{U}$ is called the model companion of $\mathcal{T}$.
\end{definition}

We now specialize to the first-order theory of nonabelian free groups.
It is immediate that this theory is not model complete. Indeed, consider
the subgroup
\[
\langle e_1,e_2^2\rangle \leq \mathbb{F}_2=\langle e_1,e_2\rangle.
\]
Although $\langle e_1,e_2^2\rangle$ is itself a free group of rank $2$,
the inclusion is not elementary: the element $e_2^2$ has a square root
in $\mathbb{F}_2$, whereas it has no square root in
$\langle e_1,e_2^2\rangle$.

We denote the first-order theory of nonabelian free groups by $T_{fg}$. We will use the following immediate consequence of the completeness of $T_{fg}$: if $\mathcal{U}^{\forall}=T_{fg}^{\forall}$ and an existential sentence holds in some model of $\mathcal{U}$, then it holds in $\mathbb F_2$ (in fact in any model of $T_{fg}$). 

Finally, we record a simple but important observation concerning
existential formulas in free groups. Namely, an existential formula
cannot distinguish a primitive element from any of its nontrivial
powers. More precisely, if $\phi(x)$ is an existential formula, $a$ is
a primitive element of $\mathbb{F}_n$, and
$\mathbb{F}_n \models \phi(a)$, then
$\mathbb{F}_n \models \phi(a^k)$ for every nonzero integer $k$.
Indeed, writing
\[
\mathbb{F}_n=\mathbb{F}_{n-1}*\langle a\rangle,
\]
it admits an embedding into
$\mathbb{F}_{n-1}*\langle b\rangle$ which is the identity on
$\mathbb{F}_{n-1}$ and sends $a$ to $b^k$. Since existential formulas
are preserved under embeddings, $\phi(b^k)$ holds in
$\mathbb{F}_{n-1}*\langle b\rangle$, which is isomorphic to
$\mathbb{F}_n$.

\section{Small cancellation and formal solutions}

Throughout, small cancellation is understood with respect to the action of
$\mathbb F_n$, $n>1$, on the Cayley tree associated with a fixed basis.
We denote by $tl(a)$ the translation length of $a\in\mathbb F_n$.

For our purposes, it suffices to consider the following specialization of
the $N$-small cancellation condition of
\cite[Definition 3.2]{MR4206600}.

\begin{definition}
Let $a$ be an element of a nonabelian free group $\mathbb F_n$ and
let $N$ be a positive integer. We say that $a$ is $N$-small cancellation
if there exists a vertex $*$ of the Cayley tree such that:
\begin{itemize}
\item $d(*,a.*)>N$;
\item $d(*,a.*)\leq \frac{N+1}{N}tl(a)$;
\item for every $g\in\mathbb F_n$, if
$\bigl|[*,a.*]\cap g[*,a. *]\bigr|
\geq \frac1N tl(a)$, then $g=1$.
\end{itemize}
\end{definition}

Small cancellation elements can be produced uniformly
from arbitrary pairs of noncommuting elements. The following consequence
of results in \cite{MR4206600} makes this precise.

\begin{proposition}\label{SmallCancellation}
For every positive integer $N$ there exists a family
\[
\mathcal W_N=\{w_1(p,q),w_2(p,q)\}\subseteq\mathbb F(p,q)
\]
such that, for every $n>1$ and every pair of noncommuting elements
$a,b\in\mathbb F_n$, at least one of the elements $w_1(a,b), w_2(a,b)$ is $N$-small cancellation.
\end{proposition}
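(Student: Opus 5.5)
The plan is to pick the two words directly, so that for every noncommuting pair $a,b$ at least one of them is a long product of large, distinct, non-overlapping powers. This is the standard route to uniform small cancellation (compare the constructions in \cite{MR4206600}).

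\textbf{Reduction to hyperbolic, non-overlapping axes.} Since $a,b$ do not commute, both are nontrivial and hence hyperbolic on the Cayley tree, with axes $A_a, A_b$. These axes need not be "independent": their overlap may be long relative to translation lengths. So first I would replace the pair $(a,b)$ by a pair $(u,v)$ given by fixed short words, for instance $u=a$ and $v=bab^{-1}$, or $u=ab$ and $v=ab^{-1}$. The aim is that, in every configuration of $A_a$ and $A_b$, one of the two candidate pairs has axes whose overlap is bounded by a fixed fraction of their translation lengths. The natural case split, using standard Bass--Serre/tree geometry of two hyperbolic isometries, is:
\begin{itemize}
\item the axes are disjoint or meet in a segment shorter than $\min(tl(a),tl(b))$;
\item the axes overlap in a segment of length at least $\min(tl(a),tl(b))$.
\end{itemize}
In the second case, translating in the same or opposite direction decides which of the products $ab$, $ab^{-1}$ has controlled overlap with the conjugate axes. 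This is why a family of two words is needed rather than one.

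\textbf{Building the small cancellation word.} Given such a pair $(u,v)$ with controlled overlap, I would set
\[
w=u^{m_1}v^{m_1}u^{m_2}v^{m_2}\cdots u^{m_k}v^{m_k},
\]
with exponents $m_1<\dots<m_k$ that are distinct and large, depending only on $N$. A ping-pong style argument on the tree then gives two things:
\begin{itemize}
\item $w$ is hyperbolic, and $[*,w.*]$ for $*$ on its axis is a concatenation of long segments along translates of $A_u, A_v$, glued with bounded backtracking. This yields $d(*,w.*)>N$ and $d(*,w.*)\le \frac{N+1}{N}tl(w)$, with the cancellation at the junctions small compared to the total.
\item If $g[*,w.*]$ overlaps $[*,w.*]$ in length at least $\frac1N tl(w)$, the overlap must contain a full block $u^{m_i}$ or $v^{m_i}$ aligned with a block of the same exponent. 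Distinctness of the exponents forces these to be the same block, and then $g$ must fix the segment, so $g=1$, since $\mathbb F_n$ acts freely.
\end{itemize}
Taking $w_1(p,q)$ and $w_2(p,q)$ to be this word with $(u,v)$ replaced by the two candidate substitutions in $p,q$ gives the family $\mathcal W_N$. Crucially, neither the words nor the exponents depend on $n$ or on $a,b$.

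\textbf{Main obstacle.} I expect the main difficulty to be uniformity: the bounds must hold independently of $tl(a)$, $tl(b)$ and the overlap length, which can all be arbitrarily large. A single segment of $w$ could also overlap a translate of itself across a period of $u$, since $u^{m}$ overlaps $u\cdot u^{m}$. This is exactly where the distinct exponents and the alternation with $v$ have to be used carefully. The case where $A_u$ and $A_v$ overlap in a long segment is the delicate one. Rather than redo the geometry, I would first try to quote the corresponding statement in \cite{MR4206600}, which produces $N$-small cancellation elements uniformly from noncommuting pairs, and check that it specializes to the definition above.
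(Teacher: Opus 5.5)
There is a genuine gap. The uniformity is the entire content of the proposition, and you never establish it; the two steps you sketch toward it do not work as stated.

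\textbf{The reduction step controls the wrong quantity.} In a free action, two noncommuting hyperbolic elements always have axes sharing fewer than $tl(u)+tl(v)$ edges (otherwise $[u,v^{\pm1}]$ fixes a point of the overlap). So comparing the overlap with $\min(tl(a),tl(b))$ is not the relevant dichotomy. What destroys uniformity is a disparity of translation lengths, which lets a fixed power of the shorter element vanish inside a junction. Your candidate $(ab,ab^{-1})$ shows exactly this. Take $a=e_1^{j+1}e_2$ and $b=e_1^{j}e_2$: their axes share $2j+1$ edges and are coherently oriented. Yet $ab^{-1}=e_1$, while the axis of $ab=e_1^{j+1}e_2e_1^{j}e_2$ runs along the axis of $e_1$ for $j+1$ edges. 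Once $j\ge m$, each block $(ab^{-1})^{m}$ lies entirely inside the overlap of the neighbouring axes, and your description of $[*,w.*]$ as long segments glued with bounded backtracking collapses. The pair $(a,bab^{-1})$ has no such defect, since these elements do not commute and have equal translation length. But you neither single it out nor prove this.

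\textbf{The block-matching step is also insufficient.} Distinct exponents cannot defeat the self-overlap you mention. Conjugate $u$ so that it translates along the line carrying the longest block: this gives $g\ne 1$ with overlap roughly $(m_k-1)tl(u)$. That exceeds $tl(w)/N$ as soon as one block carries a fixed proportion of $tl(w)$, e.g.\ with geometrically growing exponents. You need $k$ large compared with $N$ and blocks of comparable length. Further:
\begin{itemize}
\item Since $u^{m_i}$ and $v^{m_i}$ share an exponent, one matched block identifies nothing. Several consecutive blocks must be matched, with a proof that $g$ sends block endpoints to block endpoints.
\item When the axes are disjoint, the junctions are connecting segments whose length is unbounded in terms of $tl(u)$. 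These segments may themselves contain long stretches of translates of the axes: for $a=e_1$, $b=e_2e_1^{X}e_2$, the bridge from $A_a$ to $bA_a$ contains $X$ edges of $e_2A_a$. So a block may be matched into a junction rather than onto a block.
\end{itemize}

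\textbf{How the paper proceeds.} The paper does not redo this geometry; it uses the citation you mention only as a fallback, checking hypotheses in a specific order.
\begin{enumerate}
\item It passes to $(a^{10},b^{10})$, which acts irreducibly with all elements stable, so Lemma 4.6 of \cite{MR4206600} makes it an acylindrical pair.
\item Lemma 4.7 then produces $c=w^{cn}(a^{10},b^{10})$, whose axis meets $A(a^{10})$ coherently and is not preserved by $a^{10}$.
\item Corollary 4.10 with $m=1$, applied to $(a^{10},c)$, supplies two words $u_1,u_2$.
\item $\mathcal W_N$ is obtained by substituting $p^{10}$ and $w^{cn}(p^{10},q^{10})$ into $u_1,u_2$.
\end{enumerate}
The two words come from that corollary, not from a same/opposite-direction case split. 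Without identifying these ingredients, your fallback is a pointer rather than a proof.
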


\begin{proof}
Since $a$ and $b$ do not commute, neither do $a^{10}$ and $b^{10}$,
and hence $\langle a^{10},b^{10}\rangle$ acts irreducibly on the
Cayley tree. Moreover, every nontrivial element of $\mathbb F_n$ is
stable for this action. Thus Lemma~4.6 of \cite{MR4206600} implies that
$(a^{10},b^{10})$ is an acylindrical pair.

By Lemma~4.7 of \cite{MR4206600}, the element
\[
c:=w^{cn}(a^{10},b^{10})
\]
is hyperbolic, $a^{10}$ does not preserve the axis of $c$, and the axes
of $a^{10}$ and $c$ meet coherently. In particular,
\[
d(A(a^{10}),A(c))=0<tl(c).
\]
Moreover, $c$ is stable and, since $a^{10}$ does not preserve the axis
of $c$, the subgroup $\langle a^{10},c\rangle$ is noncyclic and hence
acts irreducibly on the Cayley tree. Corollary~4.10 of
\cite{MR4206600}, applied with $m=1$, therefore gives two words
$u_1(p,q),u_2(p,q)$ such that at least one of
\[
u_1(a^{10},c),\qquad u_2(a^{10},c)
\]
is $N$-small cancellation. Hence we may take
\[
\mathcal W_N=
\left\{
u_1\bigl(p^{10},w^{cn}(p^{10},q^{10})\bigr),
u_2\bigl(p^{10},w^{cn}(p^{10},q^{10})\bigr)
\right\}.
\]
\end{proof}

The following theorem is a version of the implicit function theorem
\cite[Theorem 4]{MR2154989} (see also
\cite[Theorem 1.2]{MR2060243}). Originating in Merzlyakov's work on
systems of equations \cite{Merzlyakov1966}, the theorem was subsequently
extended to incorporate inequalities and became a fundamental ingredient
in the proof of the completeness of the first-order theory of nonabelian
free groups. The particular formulation needed here is standard and
follows from the usual proof of the implicit function theorem.

\begin{theorem}\label{ImplicitlLimitGroups}
Let $\Sigma(x,\bar y)=1$ and $\Psi(x,\bar y)\neq 1$ be, respectively,
a finite system of equations and a finite conjunction of inequalities,
all coefficient-free. Then there exists a positive integer $N$,
depending only on $\Sigma$, with the following property.
Suppose that $n>1$, that $a\in\mathbb F_n$ is $N$-small
cancellation, and that
\[
\mathbb F_n\models
\exists\bar y\bigl(\Sigma(a,\bar y)=1\land
\Psi(a,\bar y)\neq 1\bigr).
\]
Then there exists a tuple of words $\bar w(x)$ with coefficients
in $\mathbb F_n$ such that every word appearing in
$\Sigma(x,\bar w(x))$ is trivial in
$\mathbb F_n*\mathbb F(x)$ and every word appearing in
$\Psi(x,\bar w(x))$ is nontrivial in
$\mathbb F_n*\mathbb F(x)$.

Consequently,
\[
\mathbb F_n*\mathbb F(z)\models
\exists\bar y\bigl(\Sigma(z,\bar y)=1\land
\Psi(z,\bar y)\neq 1\bigr).
\]
\end{theorem}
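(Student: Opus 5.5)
The plan is to argue by contradiction through a limiting argument, in the spirit of the Merzlyakov and Kharlampovich--Myasnikov proofs of the implicit function theorem and Sela's formal solutions. The real content is the existence of $N$ and of the tuple $\bar w(x)$. The ``consequently'' clause is then immediate. Write $\bar w(x)=\bar w(x,\bar c)$ with coefficients $\bar c\in\mathbb F_n$, and regard $\mathbb F_n*\mathbb F(x)$ as a subgroup of $\mathbb F_n*\mathbb F(z)$ via $x\mapsto z$. Then $\bar y=\bar w(z,\bar c)$ witnesses $\Sigma(z,\bar y)=1\land\Psi(z,\bar y)\neq 1$. The triviality and nontriviality statements are equalities and inequalities of words in the free product, and they transfer verbatim.

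For the main part, I would work with the finitely presented group $G_\Sigma=\langle x,\bar y\mid \Sigma(x,\bar y)\rangle$. Every solution $\bar b$ of $\Sigma(a,\bar y)=1$ gives a homomorphism $h\colon G_\Sigma\to\mathbb F_n$ with $x\mapsto a$. A tuple $\bar w(x)$ as required is exactly a homomorphism $G_\Sigma\to\mathbb F_n*\langle x\rangle$ that is the identity on $x$ and does not kill the finitely many elements of $\Psi$. Suppose no $N$ works. Then for each $N$ there are $n_N$, an $N$-small cancellation $a_N\in\mathbb F_{n_N}$, and a solution $\bar b_N$ satisfying the inequalities, but no formal solution exists. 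I would replace $h_N$ by a shortest representative among its precompositions with modular automorphisms of $G_\Sigma$ fixing $x$. Keeping track of the inequalities here needs care; a cleaner choice is to shorten only with respect to the $\bar y$-part. Rescaling the Cayley trees by the displacement of the generators, one obtains a limit action of a limit quotient $L$ of $G_\Sigma$ on a real tree $T$.

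The small cancellation hypothesis with $N\to\infty$ controls how $x$ sits in $T$. The segment $[*,a_N.*]$ has length essentially $tl(a_N)$. Any overlap with a translate of length at least a $\frac1N$ fraction forces the translating element to be trivial. In the limit, $x$ acts hyperbolically with an axis whose fundamental segments have trivial overlap stabilizers. If instead the action of $x$ degenerates, meaning $tl(a_N)$ is negligible relative to the scaling, the shortening argument should shorten $\bar b_N$ further, contradicting minimality. Hence in the Rips decomposition of $T$, $x$ lies in no IET or axial component with nontrivial overlap, and the orbit of its axis yields a free splitting or a simplicial edge with trivial stabilizer. From such a splitting I would build a retraction-like map $L\to\mathbb F_{n_N}*\langle x\rangle$ that sends $x$ to $x$ and acts on the complementary vertex group through $h_N$. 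That is a formal solution.

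This contradicts the choice of the counterexamples. The inequalities survive because composing the formal solution with $x\mapsto a_N$ recovers $h_N$, and $h_N$ does not kill $\Psi$. Finally, $N$ depends only on $\Sigma$: the argument passes through finitely many limit quotients of $G_\Sigma$ (Noetherianity/descending chain condition), none of which involve $\Psi$. I expect the main obstacle to be the shortening step. One must show that the small cancellation condition makes $x$ behave generically in the limit tree, so that $x$ generates a free factor of the relevant graph of groups, while keeping the shortening compatible with the inequalities.
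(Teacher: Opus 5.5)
The paper does not prove this statement. It cites it as a form of the implicit function theorem (Merzlyakov; Kharlampovich--Myasnikov, Theorem~4; Sela's formal solutions) and says it follows from the usual proof. Your sketch therefore has to stand on its own, and it does not close at the step you flagged.

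The gap is the degenerate case, where $tl(a_N)=o(\mu_N)$ for the rescaling constant $\mu_N$, so $x$ is elliptic in $T$. There you claim that ``the shortening argument should shorten $\bar b_N$ further, contradicting minimality''. This does not follow, for two reasons.
\begin{itemize}
\item You minimized over automorphisms of $G_\Sigma$ fixing $x$. The shortening argument produces automorphisms of the limit group $L$, read off from the graph-of-actions decomposition of $T$, and these need not lift to $G_\Sigma$. So there is nothing to contradict.
\item Even if you shorten homomorphisms of $L$ by its modular group relative to $\langle x\rangle$, the shortening argument only says one of two things. Either the shortened sequence converges to a \emph{proper} quotient $L'$ of $L$, or $L$ is freely decomposable relative to $\langle x\rangle$. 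Neither is a contradiction.
\end{itemize}
What is missing is an induction. Call $h\colon L\to\mathbb F_n$ with $h(x)=a$ \emph{factoring} if $h=\pi_a\circ f$ for some $f\colon L\to\mathbb F_n*\langle x\rangle$ with $f(x)=x$, where $\pi_a$ is the identity on $\mathbb F_n$ and sends $x\mapsto a$. Non-factoring has the following stability properties.
\begin{itemize}
\item It is invariant under conjugation and under precomposition with automorphisms fixing $x$.
\item It passes from $L$ to any quotient through which the maps factor.
\item It passes to the free factor containing $x$ in a Grushko decomposition relative to $x$: if the restriction factors, set $f=h$ on the other factors.
\end{itemize}
The descending chain condition for limit groups, together with the drop in rank at free-factor steps, makes this descent terminate. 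It can only stop where either the relative shortening argument gives a genuine contradiction or $x$ is hyperbolic in the limit tree.

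In the hyperbolic case your conclusion is right, but the justification must be sharper. A free splitting of $L$ in which $x$ is merely hyperbolic does not give a map fixing $x$.
\begin{itemize}
\item Let $I$ be a fundamental domain of $A_x$. The limiting small cancellation condition gives $|I\cap gI|=0$ for every $1\neq g\in L$, because $h_N(g)\neq 1$ for large $N$ along a stable sequence.
\item Hence $A_x$ has no nondegenerate subsegment in an IET or axial component. It lies in the simplicial part, its edges have trivial stabilizers, and each edge orbit meets $A_x$ only in the $\langle x\rangle$-translates of a single edge.
\item Collapse all but one edge orbit met by $A_x$. This gives a one-edge splitting over the trivial group, crossed exactly once per period by $x$. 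So $L=A*\langle t\rangle$ with $x$ conjugate to $\alpha t^{\pm1}$, $\alpha\in A$, that is, $L=A'*\langle x\rangle$.
\item Since $L$ is finitely presented, $h_N$ factors through $L$ for large $N$. Only then does your ``retraction-like map'' exist.
\end{itemize}

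Finally, your contradiction hypothesis (``a solution satisfying $\Psi$ but no formal solution'') would produce an $N$ depending on $\Psi$. The appeal to ``finitely many limit quotients'' does not repair this. Phrasing the counterexamples as non-factoring homomorphisms removes $\Psi$ entirely. It also disposes of your worry about tracking the inequalities during shortening.
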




The preceding results yield the following finite test for coefficient-free
existential formulas.

\begin{lemma}\label{FiniteTest}
Let $\phi(x)$ be a coefficient-free existential formula. Then there exists
a finite nonempty family $\mathcal W_\phi\subseteq\mathbb F(p,q)$ such that, for every $n>1$ and every pair of noncommuting elements
$a,b\in\mathbb F_n$,
$$\mathbb F_n\models
\bigwedge_{w\in\mathcal W_\phi}\phi(w(a,b))
\quad\Longrightarrow\quad
\mathbb F_n*\langle z\rangle\models\phi(z).
$$
\end{lemma}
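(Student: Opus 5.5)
Write $\phi(x)$ in disjunctive normal form. An existential formula is equivalent (in groups) to a finite disjunction
\[
\phi(x)\equiv\bigvee_{i=1}^{k}\exists\bar y\,\bigl(\Sigma_i(x,\bar y)=1\land\Psi_i(x,\bar y)\neq 1\bigr),
\]
where each $\Sigma_i$ is a finite system of coefficient-free equations and each $\Psi_i$ a finite conjunction of coefficient-free inequalities. Such a disjunct exists because atomic formulas in the language of groups are equations $w=1$, negated atomic ones are inequalities, and existential quantifiers distribute over disjunction; we may also use the same tuple $\bar y$ for all disjuncts after padding. Theorem~\ref{ImplicitlLimitGroups} then gives, for each $i$, a positive integer $N_i$ depending only on $\Sigma_i$. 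Set $N:=\max_i N_i$, and note that $N$-small cancellation implies $N_i$-small cancellation for $N_i\le N$. If this monotonicity is not clear from the definition, a safer choice avoids it: take the family to be the union $\bigcup_i \mathcal W_{N_i}$ from Proposition~\ref{SmallCancellation}, which is still finite and nonempty.

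Define $\mathcal W_\phi:=\bigcup_{i=1}^k \mathcal W_{N_i}\subseteq\mathbb F(p,q)$. Now fix $n>1$ and noncommuting $a,b\in\mathbb F_n$, and assume $\mathbb F_n\models\phi(w(a,b))$ for every $w\in\mathcal W_\phi$. The difficulty is that the disjunct witnessing $\phi(w(a,b))$ may not match the small cancellation constant of that particular $w$. To handle this, fix any index $i_0$ and apply Proposition~\ref{SmallCancellation} to $N_{i_0}$, getting $c\in\mathcal W_{N_{i_0}}$ with $c(a,b)$ $N_{i_0}$-small cancellation; then some disjunct $j$ holds at $c(a,b)$, possibly with $j\neq i_0$. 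This mismatch is why I intend to use the single constant $N=\max_i N_i$ and the family $\mathcal W_N$, and then verify that $N$-small cancellation implies $N_j$-small cancellation for every $N_j\le N$. Checking the definition: $d(*,a.*)>N\ge N_j$; $\frac{N+1}{N}\le\frac{N_j+1}{N_j}$; and overlap at least $\frac1{N_j}tl(a)$ implies overlap at least $\frac1N tl(a)$, which forces $g=1$. So monotonicity holds, and the final choice is $\mathcal W_\phi:=\mathcal W_N$.

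With that choice, Proposition~\ref{SmallCancellation} provides $w\in\mathcal W_N$ such that $w(a,b)$ is $N$-small cancellation. By hypothesis $\mathbb F_n\models\phi(w(a,b))$, so some disjunct $j$ holds at $w(a,b)$. Since $w(a,b)$ is in particular $N_j$-small cancellation, Theorem~\ref{ImplicitlLimitGroups} applied to $\Sigma_j,\Psi_j$ gives
\[
\mathbb F_n*\mathbb F(z)\models\exists\bar y\,\bigl(\Sigma_j(z,\bar y)=1\land\Psi_j(z,\bar y)\neq1\bigr).
\]
Hence $\mathbb F_n*\langle z\rangle\models\phi(z)$. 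The main thing to check carefully is the monotonicity of the small cancellation condition in $N$, which dissolves the disjunct-mismatch issue; everything else is bookkeeping.
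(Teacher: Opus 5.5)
Your proposal is correct and follows the paper's proof. You decompose $\phi$ into disjuncts, choose one $N$ valid for all of them, set $\mathcal W_\phi:=\mathcal W_N$, and apply Proposition~\ref{SmallCancellation} and Theorem~\ref{ImplicitlLimitGroups} to whichever disjunct holds at the $N$-small cancellation value $w(a,b)$. Your explicit check that $N$-small cancellation implies $N'$-small cancellation for $N'\le N$ is what justifies the paper's phrase ``choose $N$ so that Theorem~\ref{ImplicitlLimitGroups} applies to every disjunct''. The paper also pads each $\Sigma_j$ with the equation $1=1$ so that it is nonempty, a small formality you could add.
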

\begin{proof}
Write $\phi(x)$ as a finite disjunction
\[
\phi(x)=
\bigvee_{j=1}^r
\exists\bar y\,
\bigl(\Sigma_j(x,\bar y)=1\land
\Psi_j(x,\bar y)\neq1\bigr),
\]
where, by adjoining the equation $1=1$ if necessary, we may assume that
each $\Sigma_j$ is nonempty. Choose $N$ so that
Theorem~\ref{ImplicitlLimitGroups} applies to every disjunct, and let
$\mathcal W_\phi:=\mathcal W_N$ be given by
Proposition~\ref{SmallCancellation}.

Suppose that $\phi(w(a,b))$ holds for every
$w\in\mathcal W_\phi$. By Proposition~\ref{SmallCancellation}, some
$w(a,b)$ is $N$-small cancellation. Applying
Theorem~\ref{ImplicitlLimitGroups} to a disjunct of $\phi(w(a,b))$
which holds, we obtain
\[
\mathbb F_n*\langle z\rangle\models\phi(z),
\]
as required.
\end{proof}

\section{Proof of the main theorem}

\begin{proof}[Proof of Theorem~\ref{thm:main}]
Suppose that $\mathcal U$ is a model companion of $T_{fg}$. By Fact \ref{ModelComplete}, there is an existential formula $\phi(x)=\exists\bar y\,\psi(x,\bar y)$, with $\psi$ quantifier-free, such that
\begin{equation}\label{Nonsquares}
 \mathcal U\models\forall x\,
 \bigl(\phi(x)\leftrightarrow\neg\exists y\,(y^2=x)\bigr).
\end{equation}
Let
\[
 \theta(x):=\bigl(\exists y\,(y^2=x)\bigr)\vee\phi(x).
\]
This is an existential formula, and $\mathcal U\models\forall x\,\theta(x)$. Let $\mathcal W_\theta$ be the finite family supplied by Lemma \ref{FiniteTest}.

The theory $\mathcal U$ has a nonabelian model $G$. Otherwise the universal sentence $\forall p\forall q\,[p,q]=1$ would belong to $\mathcal U^{\forall}=T_{fg}^{\forall}$, a contradiction. Choose noncommuting $a,b\in G$. Then $G$ satisfies the sentence
\begin{equation}\label{FiniteSentence}
 \exists p\exists q\left([p,q]\neq1\ \wedge\
  \bigwedge_{w\in\mathcal W_\theta}\theta(w(p,q))\right).
\end{equation}
Since $\mathcal W_\theta$ is finite, this sentence is existential. By the completeness of $T_{fg}$ (see the observation at the end of Section~2) we get that \eqref{FiniteSentence} holds in $\mathbb F_2$. Applying Lemma \ref{FiniteTest} to its witnesses gives
\begin{equation}\label{FreeGenerator}
 \mathbb F_2*\langle z\rangle\models\theta(z).
\end{equation}

The element $z$ has no square root in $\mathbb F_2*\langle z\rangle$. Thus (3) implies
$\mathbb F_2*\langle z\rangle\models\phi(z)$. Since $z$ is primitive in the free group
$\mathbb F_2*\langle z\rangle$, the observation in the last paragraph of Section~2 gives
$
\mathbb F_2*\langle z\rangle\models\phi(z^2).
$
On the other hand, (1) implies the universal sentence
\[
\forall x\forall\bar y\forall t\,
\neg\bigl(\psi(x,\bar y)\land t^2=x\bigr),
\]
which belongs to $U^\forall=T_{fg}^\forall$ and hence holds in
$\mathbb F_2*\langle z\rangle$. This is a contradiction, since $z$ is
a square root of $z^2$.
\end{proof}

Finally, by \cite{MR1043446}, every nonabelian limit group is universally
equivalent to a nonabelian free group. Hence the main theorem also yields:

\begin{corollary}
The first-order theory of any nonabelian limit group does not admit a model companion.
\end{corollary}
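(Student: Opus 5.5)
The statement to prove is the corollary: for every nonabelian limit group $L$, the theory $\mathrm{Th}(L)$ has no model companion. The plan is to reduce directly to Theorem~\ref{thm:main} through the notion of model companion, which only depends on the universal part of the theory.

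The first step is to observe that if $\mathcal T_1^{\forall}=\mathcal T_2^{\forall}$, then any model complete $\mathcal U$ with $\mathcal U^{\forall}=\mathcal T_1^{\forall}$ also satisfies $\mathcal U^{\forall}=\mathcal T_2^{\forall}$. So $\mathcal T_1$ admits a model companion if and only if $\mathcal T_2$ does. This is immediate from the definition of model companion given in Section~2.

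The second step is to identify the universal theories. By \cite{MR1043446}, a nonabelian limit group $L$ is universally equivalent to a nonabelian free group, say $\mathbb F_2$. Then $\mathrm{Th}(L)^{\forall}$, the set of universal sentences true in $L$, coincides with the set of universal sentences true in $\mathbb F_2$. By the completeness of $T_{fg}$, the latter is exactly $T_{fg}^{\forall}$.

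Finally, suppose $\mathrm{Th}(L)$ had a model companion $\mathcal U$. Then $\mathcal U$ is model complete and $\mathcal U^{\forall}=\mathrm{Th}(L)^{\forall}=T_{fg}^{\forall}$. Hence $\mathcal U$ would be a model companion of $T_{fg}$, contradicting Theorem~\ref{thm:main}.

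The only point requiring care is the identification of $\mathrm{Th}(\mathbb F_2)^{\forall}$ with $T_{fg}^{\forall}$. This uses that all nonabelian free groups share one complete theory, so "the theory of nonabelian free groups" equals $\mathrm{Th}(\mathbb F_2)$; everything else is formal.
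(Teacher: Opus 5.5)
Your proposal is correct and matches the paper's argument. Model companions depend only on the universal part of the theory, and Remeslennikov's theorem \cite{MR1043446} gives $\mathrm{Th}(L)^{\forall}=T_{fg}^{\forall}$, so Theorem~\ref{thm:main} applies directly. As a minor aside, identifying $\mathrm{Th}(\mathbb F_2)^{\forall}$ with $T_{fg}^{\forall}$ does not actually need the completeness of $T_{fg}$: $\mathbb F_2$ embeds in every nonabelian free group, and every finitely generated subgroup of a free group embeds in $\mathbb F_2$.
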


\end{document}